\documentclass[12pt]{amsart}

\usepackage{cite}
\usepackage{ljm-auth}

\newtheorem{theorem}{Theorem}
\newtheorem{corollary}{Corollary}
\newtheorem{lemma}{Lemma}
\newtheorem{remark}{Remark}

\author{S.\,Grigoryan, T.\,Grigoryan, E.\,Lipacheva, A.\,Sitdikov}
\crauthor{Grigoryan, Grigoryan, Lipacheva, Sitdikov} 

\tit{$C^*$-algebra generated by the path semigroup}
\shorttit{$C^*$-algebra generated by the path semigroup} 

\setcounter{page}{3}

\begin{document}

\maketit

\address{Kazan State Power Engineering University, Krasnosel'skaya Str., 51, Kazan, Tatarstan, 420066, Russia}

\email{gsuren@inbox.ru, tkhorkova@gmail.com, elipacheva@gmail.com,
airat$_{-}$vm@rambler.ru}

\Received{July 4, 2016}

\abstract{In this paper we study the structure of the $C^*$-algebra,
generated by the representation of the path semigroup on a partially
ordered set (poset) and get a net of isomorphic $C^*$-algebras over
this poset. We construct the extensions of this algebra, such that
the algebra is an ideal in that extensions and quotient algebras are
isomorphic to the Cuntz algebra.} \notes{0}{

\subclass{46L05}%
\keywords{$C^*$-algebra, partially ordered set, partial isometry operator,
inverse semigroup, left regular representation, Cuntz algebra}%
}

\section{Introduction}

In the algebraic approach to the quantum field theory
\cite{Haag}\hspace{0.2cm}(the algebraic quantum field theory) the
physical content of the theory is encoding by a collection of
$C^*$-algebras of observables $\mathcal{A}=\{\mathcal{A}_o\}_{o\in
K}$ indexed by elements of a partially ordered set $K$ (poset)
\cite{Ruzzi}. The poset $K$ is a non-empty set with a binary
relation $\leq$ which is reflexive, antisymmetric and transitive. A
net of $C^*$-algebras over the poset $K$ is the pair
$(\mathcal{A},\gamma)_{K}$, where
$\gamma=\{\gamma_{o'o}:\mathcal{A}_o\rightarrow\mathcal{A}_{o'}\}_{o\leq
o'}$ are *-morphisms fulfilling the net relations
$$\gamma_{o''o}=\gamma_{o''o'}\circ\gamma_{o'o}$$ for all $o\leq o'\leq o''\in K$.
If we consider the poset $K$ as a category in which objects are
elements of $K$ and morphisms are arrows $(o,o')$ for all $o\leq
o'\in K$, then the net of $C^*$-algebras represents a covariant
functor from a poset $K$ to category of unital $C^*$-algebras with
*-morphisms (see for example \cite{Roberts,RuzziVasselli}). More
precisely we have a net of $C^*$-algebras for an upward directed
poset and in the event of non-upward directed we obtain a precosheaf
of $C^*$-algebras \cite{Vasselli1,Vasselli2,BrunettiRuzzi}.

In this paper we give an algebraic notion of a path on a poset $K$
which turns out to be relevant to the point of view on a path as a
sequence of 1-simplices. We introduce the path semigroup $S$ on the
given poset $K$ and construct a new $C^*$-algebra $C^*_{red}(S)$
generated by the representation of $S$. We consider both an upward
directed set $K$ and non-upward directed. The present paper is
addressed to detailed study of the path semigroup $S$ and the
$C^*$-algebra $C^*_{red}(S)$. We construct the net of isomorphic
$C^*$-algebras $\left\{\mathcal{A}_a,\gamma_{ba},a\leq b
\right\}_{a,b\in K}$ over the poset $K$, where $\mathcal{A}_a$ are
restrictions of the algebra $C^*_{red}(S)$ on Hilbert subspaces and
$\gamma_{ba}:\mathcal{A}_a \rightarrow \mathcal{A}_b$ are
*-isomorphisms, such that $\gamma_{cb}\circ\gamma_{ba}=\gamma_{ca}$
for all $a\leq b\leq c\in K$. In the last section we consider
extensions $C^*_{red,n}(S)$ and $C^*_{red,\infty}(S)$ of the algebra
$C^*_{red}(S)$. We prove that $C^*_{red}(S)$ is an ideal in
$C^*_{red,n}(S)$ and also in $C^*_{red,\infty}(S)$. We show that
quotient algebras $C^*_{red,n}(S)/C^*_{red}(S)$ and
$C^*_{red,\infty}(S)/C^*_{red}(S)$ are isomorphic to the Cuntz
algebra.

Several works in recent years have addressed the $C^*$-algebras
generated by the left regular representations of semigroups with
reduction \cite{AGL} and by the representations of an inverse
semigroup \cite{AT,GT,GLT}. In the paper \cite{ANS} have shown that
the Cuntz algebra can be represented as a $C^*$-crossed product by
endomorphisms of the CAR algebra.

\section{Path semigroup}

In this section we define the path semigroup $S$ on a partially
ordered set $K$. The semigroup $S$ is an inverse semigroup and has
subgroups $G_{a}$ corresponding to loops which start and end at the
same point $a\in K$.

Let $K$ be a partially ordered set with binary relation $\leq$
satisfying reflexivity, anti\-symmetry and transitivity conditions.
We call the set $K$ a \emph{poset}. Elements $a$ and $b$ are called
\emph{comparable} on $K$ if $a\leq b$ or $b\leq a.$ We say that the
poset $K$ is \emph{upward directed} if for every pair $a,b\in K$
there exists $c\in K$, such that $a\leq c$ and $b\leq c.$

We call an ordered pair of comparable elements $a$ and $b$  on $K$
an \emph{elementary path}. We denote it by $(b,a)$ if $b\leq a$ and
by $\overline{(b,a)}$ if $b\geq a$ and say that $a$ is a
\emph{starting point} of $p$, and $b$ is an \emph{ending point}. We
use the notation $\partial_1p=a$ to denote the starting point of $p$
and $\partial_0p=b$ to denote the ending point. For an elementary
path $p=(b,a)$ we define the \emph{inverse path}
$p^{-1}=\overline{(a,b)}.$ For $p=\overline{(b,a)}$ the inverse path
is $p^{-1}=(a,b)$. Obviously, $(p^{-1})^{-1}=p.$ Finally we call the
pair $(a,a)=\overline{(a,a)}=i_a$ a \emph{trivial path}.

Let $p_1,\ldots,p_n$ be elementary paths, such that
$\partial_0p_{i-1}=\partial_1p_i$ for $i=2,\ldots,n.$ We define a
{\em path} $p$ as the sequence $$p=p_n\ast p_{n-1}\ast\ldots\ast
p_1.$$ The starting point of $p$ is $\partial_1p=\partial_1p_1$ and
the ending point is $\partial_0p=\partial_0p_n.$ For every path
$p=p_n\ast p_{n-1}\ast\ldots\ast p_1$ the inverse path is
$$p^{-1}=p_1^{-1}\ast p_{2}^{-1}\ast\ldots\ast p_n^{-1}$$ with
$\partial_1p^{-1}=\partial_0p$ and $\partial_0p^{-1}=\partial_1p.$
Let us consider the set of all paths on $K$. We denote an
\emph{empty path} $0$ as a formal symbol. The empty path $0$ has
neither the starting point nor the ending point. We define a
semigroup structure on the set of all paths with the empty path by
extending the operation "$\ast$" to multiplication as
$$
p\ast q=\left\{\begin{array}{ll}
       p\ast q & \mbox{if $p\neq 0$, $q\neq 0$ and $\partial_1p=\partial_0q$,}\\
       0 & \mbox{otherwise}
       \end{array}\right.
$$
for all paths $p$ and $q$.

The poset $K$ is called \emph{connected} if for all $a,b\in K$ there
exists a path $p$, such that $\partial_0p=a,$ $\partial_1p=b.$
Throughout the rest of this article we assume $K$ be a connected
set.

We call the set of all paths on $K$ with the empty path a \emph{path
semigroup} and denote it by $S$ if for all $a,b,c\in K$, such that
$a\leq b\leq c$, the following axioms hold:

1. $(a,b)\ast(b,c)=(a,c);$

2. $\overline{(c,b)}\ast\overline{(b,a)}=\overline{(c,a)};$

3. $\overline{(b,a)}\ast (a,b)=i_b,$ \hspace{0.2cm} $(a,b)\ast
\overline{(b,a)}=i_a;$

4. $(a,b)\ast i_b=(a,b),$ \hspace{0.2cm} $i_a\ast(a,b)=(a,b);$

5. $\overline{(b,a)}\ast i_a=\overline{(b,a)},$ \hspace{0.2cm}
$i_b\ast\overline{(b,a)}=\overline{(b,a)};$

6. $i_a\ast i_a=i_a.$

It is easy to see that path semigroup $S$ has the following useful
properties:

1) for every $p\in S,$ such that $\partial_0p=a$, $\partial_1p=b$,
$$p^{-1}\ast p=i_b,\hspace{0.2cm} p\ast p^{-1}=i_a;$$

2) for every $p\in S,$ such that $\partial_0p=a$, $\partial_1p=b$,
$$i_a\ast p=p\ast i_b=p;$$

3) for all $p,q\in S,$ such that $\partial_0q=\partial_1p$,
$$(p\ast q)^{-1}=q^{-1}\ast p^{-1};$$

4) for all $p,q,s\in S$ if $p\ast q=p\ast s\neq 0$ or $q\ast p=s\ast
p\neq 0$ then $q=s$; so the  path semigroup $S$ is a semigroup with
a reduction.

Thus, we can write elements of $S$ as follows:
\begin{equation}\label{path}
p=(a_{2n},a_{2n-1})\ast \ldots\ast \overline{(a_3,a_2)}\ast
(a_2,a_1)\ast \overline{(a_1,a_0)}.
\end{equation}
Here elementary paths of type $(a,b)$ and $\overline{(a,b)}$
alternate with each other. Note that there exists a variety of
representations of type (\ref{path}) for a path $p$. Our definition
of the path turns out to be in full accordance with the definition
given in \cite{RuzziVasselli}. The multiplication $(a_{i+1},a_i)\ast
\overline{(a_i,a_{i-1})}$ is 1-\emph{simplex} with support $a_i$
where elements $a_{i-1}, a_i, a_{i+1}$ are 0-\emph{simplices} (see
definitions of 0-simplex and 1-simplex in
\cite{Ruzzi,Roberts,RuzziVasselli}).

Three elements $a,c,x\in K$, such that $a,c\leq x$, form a 1-simplex
denoted by
$$[a^xc]=(a,x)\ast\overline{(x,c)}$$ with support $x$.
An inverse 1-simplex is
$$[c^xa]=(c,x)\ast \overline{(x,a)}$$ with the same
support. In general a 1-simplex depends on the support. But for
example if $x,y\in K$ are comparable elements then
\begin{equation}\label{1sim} [a^xc]=[a^yc].\end{equation}
Indeed for $x\leq y$ we observe
$$[a^yc]=(a,y)\ast\overline{(y,c)}=(a,x)\ast(x,y)\ast\overline{(y,x)}\ast\overline{(x,c)}
=(a,x)\ast i_x\ast\overline{(x,c)}=[a^xc].$$ In Lemma
\ref{UpDirecSet} we show that a 1-simplex does not depend from the
support if the poset is upward directed.

Therefore, one can rewrite the path (\ref{path}) as a sequence of
1-simplices:
$$p=[a_{2n}\,^{a_{2n-1}}\hspace{0,1cm}a_{2n-2}]\ast \ldots\ast[a_2\,^{a_1}\hspace{0.1cm}a_0].$$

Let us recall the definition of an inverse semigroup (for details
see \cite{Clifford,Paterson,Vagner}). Let $S$ be a semigroup.
Elements $a,b\in S$ are called \emph{mutual inverses} if
$$a=aba, \ \ b=bab.$$ The semigroup $S$ is called an \emph{inverse
semigroup} if for every $a\in S$ there exists a unique inverse
element $b\in S$.

We use the following theorem in the proof of Lemma \ref{Sinv}.

\begin{theorem}[\cite{Vagner} ]\label{InvSem}
For a semigroup $S$ in which every element has an inverse,
uniqueness of inverses is equivalent to the requirement that all
idempotents in $S$ commute.
\end{theorem}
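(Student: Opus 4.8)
The plan is to prove the two implications separately, relying throughout on two elementary observations: first, if $b$ is an inverse of $a$ (so $a=aba$ and $b=bab$), then the products $ab$ and $ba$ are idempotents, since $(ab)(ab)=(aba)b=ab$ and likewise for $ba$; second, every idempotent $e$ is an inverse of itself, because $e=eee$. These two facts let one pass freely between the hypotheses on idempotents and the defining identities for inverses.

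For the implication that commuting idempotents force uniqueness, suppose $b$ and $c$ are both inverses of $a$. First I would substitute $a=aca$ into $b=bab$, obtaining the word $b\,a\,c\,a\,b$, and regroup it using the commutativity of the idempotents $ac$ and $ab$; together with $bab=b$ this collapses to $b=bac$. A symmetric manipulation of $c=cac$, substituting $a=aba$ and commuting the idempotents $ca$ and $ba$, gives $c=bac$ as well, whence $b=c$. The only inputs here are the idempotency of these products and the commutativity hypothesis, so this direction is purely computational.

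For the converse, assume inverses are unique and let $e,f$ be idempotents; the goal is $ef=fe$. Let $x$ denote the unique inverse of $ef$. The crucial step is to verify that $fxe$ is also an inverse of $ef$: a direct check using $e^2=e$, $f^2=f$ and the two identities $(ef)x(ef)=ef$, $x(ef)x=x$ shows both $(ef)(fxe)(ef)=ef$ and $(fxe)(ef)(fxe)=fxe$. Uniqueness then forces $x=fxe$, and from this one computes $x^2=fxefxe=fxe=x$, so $x$ is idempotent. Being idempotent, $x$ is its own inverse; but $x$ is the inverse of $ef$, and since the inverse of $x$ is unique we conclude $ef=x$, i.e. $ef$ is idempotent. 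The identical argument shows $fe$ is idempotent. Finally I would check that $fe$ is an inverse of $ef$ (both $(ef)(fe)(ef)=ef$ and $(fe)(ef)(fe)=fe$ follow at once from idempotency of $ef$ and $fe$); since $ef$, being idempotent, is its own unique inverse, uniqueness yields $fe=ef$.

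I expect the main obstacle to be the converse direction, and specifically the non-obvious guess that $fxe$, rather than $x$ itself, is the element whose forced equality with $x$ unlocks everything. Once that substitution is in hand, the chain ``product of idempotents is idempotent'' $\Rightarrow$ ``$fe$ is an inverse of the self-inverse $ef$'' $\Rightarrow$ ``$ef=fe$'' is compelled by the uniqueness hypothesis at each stage, and no further cleverness is required.
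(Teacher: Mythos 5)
Your proof is correct. Note, however, that the paper does not prove this statement at all: it is quoted as a known theorem of Vagner (the classical characterization of inverse semigroups, often attributed to Vagner and Liber, and independently to Preston) and is simply cited from \cite{Vagner}, so there is no in-paper argument to compare against. What you have written is essentially the standard textbook proof. Both directions check out: in the first, the identities $(ab)^2=ab$, $(ac)^2=ac$ and the groupings $bab=b(ac)(ab)$ and $cac=(ca)(ba)c$ do collapse to $b=bac=c$ exactly as you describe; in the second, $fxe$ is indeed a second inverse of $ef$, forcing $x=fxe$, hence $x^2=x$, hence $ef=x$ is idempotent (since an idempotent is its own inverse and inverses are unique), and the final step $fe=ef$ follows because $fe$ is then an inverse of the self-inverse idempotent $ef$. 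The one hypothesis you use implicitly but correctly is that every element (in particular $ef$) has at least one inverse, which is part of the theorem's standing assumption. No gaps.
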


\begin{lemma}\label{Sinv}
The path semigroup $S$ is an inverse semigroup.
\end{lemma}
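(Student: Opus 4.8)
The plan is to invoke the cited Wagner theorem (Theorem~\ref{InvSem}): since that result reduces the inverse-semigroup property to (i) the existence of an inverse for every element, together with (ii) the commutativity of all idempotents, I would establish these two facts separately.

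First I would check that every element of $S$ has an inverse in the semigroup sense $a=aba$, $b=bab$. For a nonzero path $p$ with $\partial_0 p=a$ and $\partial_1 p=b$, property~1 gives $p\ast p^{-1}=i_a$ and $p^{-1}\ast p=i_b$, whence, using property~2,
$$p\ast p^{-1}\ast p=i_a\ast p=p,\qquad p^{-1}\ast p\ast p^{-1}=p^{-1}\ast i_b=p^{-1}.$$
Thus $p$ and $p^{-1}$ are mutual inverses. The empty path satisfies $0\ast 0\ast 0=0$, so it is its own inverse, and hence existence of inverses holds throughout $S$.

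The heart of the argument is the identification of the idempotents and the verification that they commute. I would claim that the idempotents of $S$ are exactly the trivial paths $i_a$, $a\in K$, together with the empty path $0$. The trivial paths are idempotent by axiom~6, and $0\ast 0=0$ by definition. Conversely, if $p$ is a nonzero idempotent, then $p\ast p\neq 0$ forces $\partial_1 p=\partial_0 p=:b$, so $p$ is a loop; writing $p=p\ast i_b$ by property~2 and comparing with $p\ast p=p$, the reduction property~4 yields $p=i_b$. This is the step I expect to be the main obstacle, since it is where the defining axioms and the reduction law genuinely enter, whereas the rest is formal. Commutativity is then immediate: $i_a\ast i_b$ (and likewise $i_b\ast i_a$) is nonzero only when $a=b$, in which case both equal $i_a$, and otherwise both products vanish; any product involving $0$ equals $0$. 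Hence all idempotents of $S$ commute.

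Finally, having verified (i) and (ii), I would apply Theorem~\ref{InvSem} to conclude that inverses in $S$ are unique, that is, $S$ is an inverse semigroup.
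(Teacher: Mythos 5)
Your proof follows the same route as the paper's: existence of mutual inverses via $p^{-1}$, commutativity of idempotents, and an appeal to Theorem~\ref{InvSem}. In fact you are more careful than the paper at the decisive point: Wagner's criterion requires that \emph{all} idempotents commute, and your classification of the nonzero idempotents as exactly the trivial paths $i_a$ (forced by the reduction property applied to $p\ast p=p\ast i_b$) is precisely the step the paper's proof leaves implicit, since it only observes that the $i_a$ form a commutative subsemigroup of idempotents.
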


\begin{proof} Let $p\in S$ be a path with a starting point $\partial_1p=a$
and an ending point $\partial_0p=b.$ For every $p$ there is an
inverse path $p^{-1}$, such that $$p\ast p^{-1}\ast p=i_{b}\ast p=p,
\ \ p^{-1}\ast p\ast p^{-1}=i_{a}\ast p^{-1}=p^{-1}.$$ Hence, $p$
and $p^{-1}$ are mutual inverses elements. For every $a\in K$ we
have $i_a\ast i_a=i_a$ and $i_a\ast i_b=0$ for all $a\neq b$.
Therefore the set $\{i_a\}_{a\in K}$ forms a commutative
subsemigroup of idempotents in the path semigroup $S$. Hence, by
Theorem \ref{InvSem} the path semigroup $S$ is an inverse semigroup.
\end{proof}

\begin{lemma}\label{ElemDefPuth}
If for some 1-simplices $[a^xb]$ and $[b^yc]$ there exists $z\in K$,
such that $x,y\leq z$, then $[a^xb]\ast [b^yc]=[a^zc].$
\end{lemma}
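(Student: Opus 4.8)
The plan is to raise both 1-simplices to the common support $z$ and then let the two overlapping elementary paths at $z$ cancel. First I would check that the right-hand side $[a^zc]$ is even legitimately defined: from $a\leq x$ and $x\leq z$ we get $a\leq z$, and likewise $c\leq y\leq z$ gives $c\leq z$, so $[a^zc]=(a,z)\ast\overline{(z,c)}$ makes sense.

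The key observation is that the support-independence relation (\ref{1sim}) already does most of the work. Since $x\leq z$, the elements $x$ and $z$ are comparable, and $a,b\leq x\leq z$, so (\ref{1sim}) yields $[a^xb]=[a^zb]$. Symmetrically, from $y\leq z$ together with $b,c\leq y\leq z$ we obtain $[b^yc]=[b^zc]$. Hence the product in question equals $[a^zb]\ast[b^zc]$, where now all three simplices share the single support $z$.

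It then remains to expand by definition and cancel. Writing $[a^zb]=(a,z)\ast\overline{(z,b)}$ and $[b^zc]=(b,z)\ast\overline{(z,c)}$, the product becomes $(a,z)\ast\overline{(z,b)}\ast(b,z)\ast\overline{(z,c)}$. Because $b\leq z$, axiom 3 gives $\overline{(z,b)}\ast(b,z)=i_z$, and the absorbing property of $i_z$ (axiom 4, in the form $(a,z)\ast i_z=(a,z)$) collapses the middle, leaving $(a,z)\ast\overline{(z,c)}=[a^zc]$, exactly as claimed.

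I do not expect a serious obstacle here: the only point demanding care is verifying that the hypotheses of (\ref{1sim}) genuinely hold when I raise each support to $z$, namely that the relevant endpoints $a,b,c$ remain below $z$ and that $x,y$ are each comparable to $z$. These are immediate from $x,y\leq z$ together with transitivity, so the whole argument reduces to two applications of (\ref{1sim}) followed by a single cancellation via axiom 3.
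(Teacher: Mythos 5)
Your proof is correct and takes essentially the same route as the paper's: the paper carries out the identical computation inline, inserting $(x,z)\ast\overline{(z,x)}=i_x$ and $(y,z)\ast\overline{(z,y)}=i_y$ to raise both supports to $z$ and then cancelling $\overline{(z,b)}\ast(b,z)=i_z$, so your two applications of (\ref{1sim}) are just a tidier packaging of that first step. No gaps.
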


\begin{proof} We have $$[a^xb]\ast [b^yc] = (a,x)\ast \overline{(x,b)}\ast
(b,y)\ast \overline{(y,c)}$$
$$=(a,x)\ast(x,z)\ast\overline{(z,x)}\ast\overline{(x,b)}\ast(b,y)\ast(y,z)\ast\overline{(z,y)}
\ast\overline{(y,c)}$$
$$=(a,z)\ast\overline{(z,b)}\ast(b,z)\ast\overline{(z,c)}=(a,z)\ast\overline{(z,c)}=[a^zc].
$$
\end{proof}

\begin{corollary}\label{deform}
If for some 1-simplices $[a^xb]$, $[b^yc]$ and $[a^zc]$ there exists
$w\in K$, such that $x,y,z\leq w$, then $[a^xb]\ast [b^yc]=[a^zc].$
\end{corollary}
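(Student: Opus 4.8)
The plan is to reduce this statement to Lemma~\ref{ElemDefPuth} together with the support-independence relation~(\ref{1sim}). Since $[a^zc]$ is assumed to be a genuine $1$-simplex, its definition forces $a,c\le z$, and the hypothesis gives $z\le w$; similarly the two other $1$-simplices give $a,b\le x\le w$ and $b,c\le y\le w$. Thus $w$ is a common upper bound large enough for all the relevant $1$-simplices to be defined with support $w$, in particular $[a^wc]$ makes sense because $a,c\le w$.

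First I would apply Lemma~\ref{ElemDefPuth} to the pair $[a^xb]$, $[b^yc]$ using the common bound $w$ (which is legitimate precisely because $x,y\le w$), obtaining $[a^xb]\ast[b^yc]=[a^wc]$. Next I would invoke~(\ref{1sim}): since $z\le w$ and $a,c\le z$, the very computation that establishes~(\ref{1sim}) shows $[a^zc]=[a^wc]$, i.e.\ enlarging the support from $z$ to the comparable element $w$ leaves the $1$-simplex unchanged. Chaining the two equalities yields $[a^xb]\ast[b^yc]=[a^wc]=[a^zc]$, as required.

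There is essentially no serious obstacle here; the real content is already carried by the preceding lemma, and the corollary merely packages it once a single common upper bound for all three supports is available. The only points needing care are bookkeeping the comparabilities, so that $a,b,c$ all lie below $w$ and the intermediate expressions in Lemma~\ref{ElemDefPuth} and the symbol $[a^wc]$ are all legitimate, and applying~(\ref{1sim}) in the correct direction, namely replacing the support $z$ by the larger comparable support $w$ rather than the reverse.
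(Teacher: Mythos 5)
Your proof is correct and follows exactly the paper's route: apply Lemma~\ref{ElemDefPuth} with the common upper bound $w$ to get $[a^xb]\ast[b^yc]=[a^wc]$, then use the support-independence relation~(\ref{1sim}) for the comparable supports $z\leq w$ to conclude $[a^wc]=[a^zc]$. The extra bookkeeping of the comparability conditions is a welcome addition but does not change the argument.
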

\begin{proof} Using the Lemma \ref{ElemDefPuth} and the equality
(\ref{1sim}) we have $[a^xb]\ast [b^yc]=[a^wc]=[a^zc].$
\end{proof}

In the works \cite{Roberts,RuzziVasselli} there exists the notion of
an elementary deformation of a path. They say that a path admits an
{\em elementary deformation} if one can replace some section
$[a^xb]\ast[b^yc]$ of the path with $[a^zc]$ and vice versa. It is
possible in the conditions of the Corollary \ref{deform}. If we can
obtain a path $q\in S$ from some path $p\in S$ by a finite number of
elementary deformations then according to the Lemma
\ref{ElemDefPuth} and the Corollary \ref{deform} we have the
equality $q=p.$

We say that $p\in S$ is a \textit{loop} if $\partial _{0} p=\partial
_{1} p$.

Let us denote by $G_{a}$ the set of all loops that start and end in
the point $a$.

\begin{lemma}\label{GrLoop} The following statements hold:

{\rm 1)} the set $G_{a} $ is a subgroup in $S$ with a unit $i_{a} $;

{\rm 2)} each path $p$ generates isomorphism between groups $G_{a}$
and $G_{b}$  if \ $\partial _{0} p=a,\; \;
\partial _{1} p=b$;

{\rm 3)} if $p,q\in S$ and $\partial _{0} p=\partial _{0} q=a,\; \;
\partial _{1} p=\partial _{1} q=b$, then there exist $g_{1} \in
G_{a} $ and $g_{2} \in G_{b} $, such that $p=g_{1} *q=q*g_{2} $.
\end{lemma}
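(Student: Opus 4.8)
The plan is to verify the three claims directly from the semigroup axioms and the four listed properties of $S$, taking care throughout to track starting and ending points so that each product that appears is genuinely defined, i.e.\ does not degenerate to the empty path $0$.

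For part 1) I would first check closure: if $p,q\in G_a$ then $\partial_1 p=a=\partial_0 q$, so $p\ast q$ is defined, and $\partial_0(p\ast q)=\partial_0 p=a$, $\partial_1(p\ast q)=\partial_1 q=a$, whence $p\ast q\in G_a$. Thus the partial operation of $S$ restricts to an everywhere-defined operation on $G_a$, and associativity is inherited from $S$. The element $i_a$ lies in $G_a$ and, by property 2), satisfies $i_a\ast p=p\ast i_a=p$ for every $p\in G_a$, so it is a two-sided unit. Finally, for $p\in G_a$ the inverse path $p^{-1}$ again has $\partial_0 p^{-1}=\partial_1 p^{-1}=a$, so $p^{-1}\in G_a$, and property 1) gives $p\ast p^{-1}=p^{-1}\ast p=i_a$. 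Hence $G_a$ is a subgroup of $S$.

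For part 2), given $p$ with $\partial_0 p=a$ and $\partial_1 p=b$, I would define $\varphi_p\colon G_b\to G_a$ by $\varphi_p(g)=p\ast g\ast p^{-1}$; an endpoint check (using $\partial_0 p^{-1}=b$, $\partial_1 p^{-1}=a$) confirms the right-hand side is a well-defined element of $G_a$. To see it is a homomorphism, I would write $g_1\ast g_2=g_1\ast i_b\ast g_2=g_1\ast(p^{-1}\ast p)\ast g_2$ for $g_1,g_2\in G_b$ (legitimate since $p^{-1}\ast p=i_b$ by property 1) and $g_1\ast i_b=g_1$ by property 2)), which yields $\varphi_p(g_1\ast g_2)=\varphi_p(g_1)\ast\varphi_p(g_2)$. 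The map $\varphi_{p^{-1}}\colon G_a\to G_b$, $h\mapsto p^{-1}\ast h\ast p$, is a two-sided inverse, because $p\ast p^{-1}=i_a$ and $p^{-1}\ast p=i_b$ collapse the composites to the respective identities; therefore $\varphi_p$ is an isomorphism.

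For part 3), with $\partial_0 p=\partial_0 q=a$ and $\partial_1 p=\partial_1 q=b$, I would simply set $g_1=p\ast q^{-1}$ and $g_2=q^{-1}\ast p$. Tracking endpoints shows $g_1\in G_a$ and $g_2\in G_b$, and then property 1) gives $g_1\ast q=p\ast(q^{-1}\ast q)=p\ast i_b=p$ and $q\ast g_2=(q\ast q^{-1})\ast p=i_a\ast p=p$, as required. I do not anticipate any serious obstacle here; the only point demanding genuine care is the bookkeeping of starting and ending points, since multiplication in $S$ is only partially defined and one must confirm at each step that no product vanishes to $0$.
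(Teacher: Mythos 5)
Your proof is correct and follows essentially the same route as the paper: part 1) by direct verification of the group axioms, part 2) by conjugation by $p$ (the paper uses $\gamma_p(g)=p^{-1}\ast g\ast p\colon G_a\to G_b$ while you use the inverse-direction map $g\mapsto p\ast g\ast p^{-1}\colon G_b\to G_a$, which is the same isomorphism read backwards), and part 3) with exactly the paper's choices $g_1=p\ast q^{-1}$, $g_2=q^{-1}\ast p$. Your endpoint bookkeeping just fills in the details the paper leaves as ``obvious.''
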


\begin{proof} 1) The first statement is obvious.

2) Define a map $\gamma _{p} :G_{a} \to G_{b} $ in the following
way:
\[\gamma _{p} (g)=p^{-1} gp,\]
where $g\in G_{a} $. One can check that  $\gamma _{p} $ is an
isomorphism.

3) It is easy to see that the statement holds for $g_{1} =p*q^{-1}
\in G_{a} $ and $g_{2} =q^{-1} *p\in G_{b} $.
\end{proof}

\begin{lemma}\label{UpDirecSet}
If the poset $K$ is an upward directed set then the following
statements hold:

{\rm 1)} for all $a,b,x,y\in K$ if  $a,b\le x$ and $a,b\le y$ then
$$[a^xb]=(a,x)*\overline{(x,b)}=(a,y)*\overline{(y,b)}=[a^yb];$$
for simplicity let us omit supports and denote a 1-simplex by
$[a,b];$

{\rm 2)} $[a,b]*[b,c]=[a,c]$ for all  $a,b,c\in K;$

{\rm 3)} for every $p\in S$ if $\partial _{0} p=a$ and $\partial
_{1} p=b$ then $p=[a,b];$

{\rm 4)} if $g\in G_{a} $ then $g=i_{a} $ and the group $G_{a} $ is
a trivial group.
\end{lemma}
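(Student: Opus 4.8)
The plan is to prove the four statements of Lemma \ref{UpDirecSet} in order, since each relies on the previous one. The key engine throughout is the upward-directedness of $K$: any finite collection of elements has a common upper bound, so we can always funnel supports up to a single point and invoke the defining axioms together with Lemma \ref{ElemDefPuth} and Corollary \ref{deform}.

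For statement (1), I would use the common upper bound to reduce the two competing supports $x$ and $y$ to a shared support. Since $a,b\le x$ and $a,b\le y$, upward-directedness gives some $z\in K$ with $x,y\le z$ (and hence $a,b\le z$). Then I would apply the support-independence equality (\ref{1sim}) twice: because $x\le z$ we have $[a^xb]=[a^zb]$, and because $y\le z$ we have $[a^yb]=[a^zb]$, whence $[a^xb]=[a^yb]$. This legitimizes suppressing the support and writing $[a,b]$. For statement (2), the composition $[a,b]*[b,c]=[a^xb]*[b^yc]$ for any valid supports $x,y$; picking a common upper bound $z$ of $x$ and $y$ and applying Lemma \ref{ElemDefPuth} directly yields $[a^zc]=[a,c]$.

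Statement (3) is the crux and the step I expect to be the main obstacle, since it asserts that \emph{every} path, no matter how long its alternating representation (\ref{path}), collapses to a single 1-simplex determined only by its endpoints. The plan is induction on the number of elementary factors (equivalently, on the number of 1-simplices in the representation $p=[a_{2n}{}^{a_{2n-1}}a_{2n-2}]*\cdots*[a_2{}^{a_1}a_0]$). The base case is a single 1-simplex, covered by (1). For the inductive step I would peel off the rightmost 1-simplex $[a_2,a_0]$, apply the induction hypothesis to the remaining initial segment (which has the correct endpoints $b=a_0$ as its starting point after the product, and some intermediate point), and then fuse the two adjacent 1-simplices using statement (2). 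The care needed here is to verify that adjacency of endpoints is preserved, so that each application of (2) is legitimate; this is guaranteed by the composability condition $\partial_0 p_{i-1}=\partial_1 p_i$ built into the definition of a path, together with the fact that after fusion the composite remains a genuine 1-simplex with the endpoints matching the next factor. Because $K$ is upward directed, every pairwise fusion required in the induction has an available common bound, so no step is blocked.

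Finally, statement (4) follows immediately from (3): if $g\in G_a$ then $\partial_0 g=\partial_1 g=a$, so by (3) we get $g=[a,a]=(a,x)*\overline{(x,a)}$ for any $x\ge a$; choosing $x=a$ and applying axioms 3 and 6 (or directly property 1 with $p=i_a$) gives $g=i_a$. Hence $G_a=\{i_a\}$ is trivial. I would note that the only genuine content beyond bookkeeping lies in statement (3), and there the essential point is that upward-directedness removes every obstruction to repeatedly applying the elementary deformation of Corollary \ref{deform}.
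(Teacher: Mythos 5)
Your proposal is correct and follows essentially the same route as the paper: part (1) via a common upper bound and the support-independence equality (\ref{1sim}), part (2) via Lemma \ref{ElemDefPuth}, part (3) by repeatedly fusing adjacent 1-simplices with part (2), and part (4) by collapsing a loop to $[a,a]=i_a$. Your explicit induction in (3) merely fleshes out what the paper dispatches with ``it follows from 2)'', so there is no substantive difference.
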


\begin{proof} 1) As the poset $K$ is upward directed set then there exists $z\in
K$, such that $x,y\le z$. Hence, we have
\begin{multline*}
[a^xb]=(a,x)*\overline{(x,b)}=(a,x)*(x,z)*\overline{(z,x)}*\overline{(x,b)}
=(a,z)*\overline{(z,b)}=\\
=(a,y)*(y,z)*\overline{(z,y)}*\overline{(y,b)}
=(a,y)*\overline{(y,b)}=[a^yb].\end{multline*}

2) It follows from Lemma \ref{ElemDefPuth}.

3) It follows from 2).

4) For every $g\in G_{a} $ we have $g=[a,a_{n} ]*\ldots*[a_{2},a_{1}
]*[a_{1},a]$. Using 2) several times, one gets $g=[a,a_{1}
]*[a_{1},a]=[a,a]=(a,a)*\overline{(a,a)}=i_{a}.$
\end{proof}

\section{$C^*$-algebra $C_{red}^{*}(S)$}

In this section we define the $C^*$-algebra $C_{red}^{*}(S)$
generated by the representation of the path semigroup $S$ and obtain
the net of isomorphic $C^*$-algebras
$(\mathcal{A}_a,\gamma_{ba},a\leq b)_{a,b\in K}$ over the poset $K$,
where $\gamma_{ba}:\mathcal{A}_a\rightarrow\mathcal{A}_b$ are
*-isomorphisms satisfying the identity
$\gamma_{cb}\circ\gamma_{ba}=\gamma_{ca}$ for $a\leq b\leq c.$

Let us consider a Hilbert space
\[l^{2} (S)=\left\{f:S\to \mathbb{C}\; \; |\; \; \sum _{p\in S}|f(p)|^{2}  <\infty \right\}\]
with an inner product $\left\langle f,g\right\rangle =\sum _{p\in
S}f(p)\overline{g(p)} $. A family of functions  $\left\{e_{p}
\right\}_{p\in S} $ is an ortonormal basis of $l^2(S)$ where $e_{p}
(p')=\delta _{p,p'} $  is a Kronecker symbol. Let  $B(l^{2} (S))$ be
the algebra of all linear bounded operators acting on $l^{2} (S)$.

Define a representation $\pi: S\rightarrow B(l^2(S))$ by
$\pi(p)=T_p$ where
$$T_pe_q=\left\{
 \begin{array}{ll}
 e_{p\ast q} & \mbox{if $\partial_1p=\partial_0q$,}
 \\
 0 & \mbox{otherwise.}\\ \end{array}\right.$$
Note that $\pi$ is the left regular representation and coincides
with the Vagner representation of an inverse semigroup (see the
definition of the Vagner representation in \cite{Paterson}).

We have $\left\langle T_{p} e_{q} ,e_{r} \right\rangle \ne 0$ if and
only if $p*q=r$ or $q=p^{-1} *r$. Hence,
\[\left\langle T_{p} e_{q} ,e_{r} \right\rangle =\left\langle e_{q} ,T_{p^{-1} } e_{r}
\right\rangle .\]

Define the adjoint operator $T_{p}^{*} =T_{p^{-1} } $. In Lemma
\ref{Tp} we show that operators $T_p$ and $T_p^*$ are partial
isometric operators.

Given $a\in K$ we define $S_{a} =\left\{p\in S\; \; |\; \; \partial
_{0} p=a\right\}$. Thus $l^2(S)$ can be written as
\[l^{2} (S)=\mathop{\oplus }\limits_{a\in K} l^{2} (S_{a} ).\]

\begin{lemma}\label{Tp} The following statements hold:

{\rm 1)} for every $p\in S$, such that $\partial _{0} p=a,\;
\partial _{1} p=b$, the operator  $T_{p} $ is a mapping from
 $l^{2} (S_{b} )$ to $l^{2} (S_{a} )$ and the operator $T_{p}^{*} $ is an inverse mapping from
 $l^{2} (S_{a} )$ to $l^{2} (S_{b})$;

 {\rm 2)} for every $p\in S$, such that $\partial _{0} p=a,\; \partial _{1} p=b$, operators
$I_{a} =T_{p} T_{p}^{*} $  and  $I_{b} =T_{p}^{*} T_{p} $ are
projectors on $l^{2} (S_{a} )$ and $l^{2} (S_{b} )$ respectively;

 {\rm 3)} for every $g\in G_{a} $ the operator $T_{g} $ is a unitary operator on $l^{2} (S_{a})$;

 {\rm 4)} for all $p,q\in S$, such  that $\partial _{0} p=\partial _{0} q=a,\; \partial _{1} p=\partial _{1}
 q=b$,
 there exist $g_{1} \in G_{a} $ and $g_{2} \in G_{b} $, such that $T_{p} =T_{g_{1} } T_{q} =T_{q} T_{g_{2}}$.
\end{lemma}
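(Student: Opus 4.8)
The plan is to prove parts 1) and 2) by a direct computation on the basis vectors $\{e_q\}$ using the path-semigroup axioms, and then to read off parts 3) and 4) from these together with Lemma \ref{GrLoop}.

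First I would settle 1). If $q\in S_b$, so that $\partial_0 q=b=\partial_1 p$, then $T_p e_q=e_{p\ast q}$, and since $\partial_0(p\ast q)=\partial_0 p=a$ the vector $e_{p\ast q}$ lies in $l^2(S_a)$; if instead $\partial_0 q\neq b$ then $T_p e_q=0$. Hence $T_p$ carries $l^2(S_b)$ into $l^2(S_a)$ and annihilates its orthogonal complement, and symmetrically $T_p^{*}=T_{p^{-1}}$ carries $l^2(S_a)$ into $l^2(S_b)$ because $\partial_0 p^{-1}=b$ and $\partial_1 p^{-1}=a$. To see that these restrictions are mutually inverse, I would compute, for $q\in S_b$, using associativity of $\ast$ and property 1) of the path semigroup,
\[ T_p^{*}T_p e_q = e_{p^{-1}\ast(p\ast q)} = e_{(p^{-1}\ast p)\ast q} = e_{i_b\ast q} = e_q, \]
and in the same way $T_p T_p^{*} e_r=e_r$ for every $r\in S_a$.

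Part 2) is then immediate from this computation: $T_p^{*}T_p$ fixes each $e_q$ with $q\in S_b$ and kills each $e_q$ with $q\notin S_b$, so it is the orthogonal projection $I_b$ onto $l^2(S_b)$, and symmetrically $T_p T_p^{*}=I_a$. For part 3) I would take $p=g\in G_a$, whence $a=b$; by 1) the operator $T_g$ restricts to a bijection of $l^2(S_a)$ with inverse $T_g^{*}$, and by 2) both $T_g^{*}T_g$ and $T_g T_g^{*}$ equal the identity on $l^2(S_a)$, so $T_g$ is unitary there.

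Finally, part 4) I would derive from Lemma \ref{GrLoop} 3), which furnishes $g_1\in G_a$ and $g_2\in G_b$ with $p=g_1\ast q=q\ast g_2$; applying $\pi$ and using multiplicativity $T_{s\ast t}=T_s T_t$ gives $T_p=T_{g_1}T_q=T_q T_{g_2}$. The one point that genuinely needs checking, and which I expect to be the main obstacle, is this multiplicativity of $\pi$: one must verify $T_{s\ast t}=T_s T_t$ on every basis vector, including the cases in which a product degenerates to the empty path $0$. This reduces to associativity of $\ast$ together with the matching condition on endpoints, and is precisely the assertion that $\pi$ is the Vagner left regular representation of the inverse semigroup $S$ established in Lemma \ref{Sinv}.
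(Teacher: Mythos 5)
Your proof is correct and follows essentially the same route as the paper: a direct computation of $T_pe_q=e_{p\ast q}$ on basis vectors for parts 1)--3), and an appeal to Lemma \ref{GrLoop} 3) for part 4). The only difference is that you make explicit the multiplicativity $T_{s\ast t}=T_sT_t$ needed in part 4), which the paper leaves tacit; that is a reasonable bit of extra care, not a divergence.
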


\begin{proof} 1) We observe that $T_{p} e_{q} =e_{p\ast q}$ if $\partial _{0} q=b$ and $T_pe_q=0$ otherwise.
Since $\partial _{0}
 (p*q)=a$ then
$T_{p} :l^{2} (S_{b} )\to l^{2} (S_{a} )$. Similarly, $T_{p}^{*}
:l^{2} (S_{a} )\to l^{2} (S_{b} )$.

 2) It is easy to see that $I_{a} e_{q}=T_{p} T_{p}^{*}e_q=e_{p\ast p^{-1}\ast q} =e_{q} $
 if $\partial _{0} q=a$ and $I_{a} e_{q} =0$ otherwise.
Therefore, $I_{a} $ is a projector on $l^{2} (S_{a} )$. Similarly,
one can prove that $I_{b} $ is a projector on $l^{2} (S_{b} )$.

 3) We have $T_{g} :l^{2} (S_{a} )\to l^{2} (S_{a} )$ and $T_{g} T_{g}^{*} e_{p}=e_{g\ast g^{-1}\ast p} =e_{p},$
  $T_{g}^{*} T_{g} e_{p} =e_{p} $
 for every $p\in S_{a} $. Hence, $T_g$ is a unitary operator.

 4) This statement follows from the Lemma \ref{GrLoop} (item 3).
\end{proof}

 Let us denote by $C_{red}^{*} (S)$ a uniformly closed subalgebra of $B(l^{2} (S))$ generated by operators
 $T_{p} $ for every $p\in S$.
 Obviously the set of finite linear combinations of operators $T_{p} $, $p\in S$,
 is dense in â $C_{red}^{*} (S)$.

Given $a\in K$ we denote $S^{a} =\left\{p\in S\; \; |\; \; \partial
_{1} p=a\right\}$. Thus we have again
\[l^{2} (S)=\mathop{\oplus }\limits_{a\in K} l^{2} (S^{a} ).\]

\begin{theorem}\label{PrSum} The following statements hold:

{\rm 1)} the algebra $C_{red}^{*} (S)$ is irreducible on $l^{2}
(S^{a} )$ for every $a\in K$;

{\rm 2)} $C_{red}^{*} (S)=\mathop{\oplus }\limits_{a\in K}
C_{red}^{*} (S)|_{l^{2} (S^{a} )} $ and every operator $A\in
C_{red}^{*} (S)$ can be represented as $A=\mathop{\oplus
}\limits_{a\in K} A_{a}$
 where $A_{a} =A|_{l^{2} (S^{a})}$;

{\rm 3)} if the group $G_{a} $ is non-trivial then $C_{red}^{*}
(S)|_{l^{2} (S^{a} )} $ doesn't contain compact operators.
\end{theorem}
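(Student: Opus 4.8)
The plan for proving Theorem \ref{PrSum} is to treat the three statements in order, since each builds on the structure established in Lemma \ref{Tp}. The key observation throughout is that $l^2(S^a)$ is the closed span of the basis vectors $\{e_p : \partial_1 p = a\}$, and that the generators $T_q$ permute these basis vectors (up to annihilation): $T_q e_p = e_{q \ast p}$ whenever $\partial_1 q = \partial_0 p$, which preserves the starting point $\partial_1 p = a$. Thus $l^2(S^a)$ is indeed an invariant subspace for every $T_q$, so the restriction $C^*_{red}(S)|_{l^2(S^a)}$ makes sense.

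For statement (1), irreducibility, the plan is to show that the commutant of $C^*_{red}(S)|_{l^2(S^a)}$ consists only of scalars, or equivalently that any nonzero invariant subspace is all of $l^2(S^a)$. First I would fix two basis vectors $e_p, e_{p'} \in l^2(S^a)$ with $\partial_0 p = \partial_0 p' $; using the inverse-semigroup structure one can find a loop or path sending $e_p$ to $e_{p'}$ via an appropriate $T_q$, and the partial-isometry relations $T_q T_q^* = I_{\partial_0 q}$, $T_q^* T_q = I_{\partial_1 q}$ from Lemma \ref{Tp} let me move freely between subspaces $l^2(S_b) \cap l^2(S^a)$. The concrete strategy: given any nonzero vector $\xi$ in an invariant subspace, apply a suitable projector $I_b = T_p^* T_p$ to isolate a single basis component, then apply generators $T_q$ to reach every other basis vector $e_r$ with $\partial_1 r = a$. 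Connectedness of $K$ guarantees the requisite paths exist, so the orbit of any single $e_p$ under the generated algebra spans a dense subspace.

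For statement (2), the direct-sum decomposition, the plan is mostly bookkeeping: since each $T_q$ leaves every $l^2(S^a)$ invariant (as noted above) and $l^2(S) = \bigoplus_{a \in K} l^2(S^a)$, every generator decomposes as $T_q = \bigoplus_a T_q|_{l^2(S^a)}$, and this passes to the generated $C^*$-algebra and to its norm-closure. Hence any $A \in C^*_{red}(S)$ respects the decomposition and is given by its restrictions $A_a = A|_{l^2(S^a)}$.

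The main obstacle will be statement (3): showing that if $G_a$ is non-trivial then $C^*_{red}(S)|_{l^2(S^a)}$ contains no compact operators. The natural approach is a contradiction argument exploiting the nontrivial loop group. By Lemma \ref{Tp}(3), a nontrivial element $g \in G_a$ gives a unitary $T_g$ on $l^2(S_a)$ of infinite order (or at least generating a copy of the group), and these unitaries translate basis vectors without fixed points. A compact operator $A$ would have $T_g^{*n} A T_g^n \to 0$ strongly as the translates wander off to infinity; but if $A$ is in the algebra and commutes appropriately with the decomposition, one can arrange that $T_g^{*n} A T_g^n$ stays bounded away from $0$, contradicting compactness. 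The delicate point is to exhibit an explicit infinite family of mutually orthogonal basis vectors on which any putative nonzero compact operator would have to act nontrivially and uniformly — the nontriviality of $G_a$ is exactly what supplies this infinite "orbit" $\{g^n \ast p\}_{n \in \mathbb{Z}}$ of distinct loops, preventing the finite-rank approximation that compactness requires.
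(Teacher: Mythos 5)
Your statement (2) is handled exactly as in the paper: each $T_q$ maps $l^{2}(S^{a})$ into itself, so the decomposition passes to finite linear combinations and to the norm closure. For statement (1), your core observation is also the paper's entire proof: for $p_{1},p_{2}\in S^{a}$ the single generator $T_{p_{2}*p_{1}^{-1}}$ sends $e_{p_{1}}$ to $e_{p_{2}}$, so the algebra acts transitively on the basis of $l^{2}(S^{a})$. But the extra step you propose for turning transitivity into irreducibility does not work: the projector $I_{b}=T_{p}^{*}T_{p}$ restricted to $l^{2}(S^{a})$ has range spanned by \emph{all} $e_{q}$ with $\partial_{1}q=a$ and $\partial_{0}q=b$, and by Lemma \ref{GrLoop} (item 3) these $q$ are in bijection with the loop group $G_{b}$. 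So when the loop groups are non-trivial this projector does not isolate a single basis component, and no element of the algebra can: the right-translation unitaries $R_{g}e_{q}=e_{q*g}$, $g\in G_{a}$, commute with every $T_{p}$ on $l^{2}(S^{a})$, so nothing in the algebra can separate $e_{q}$ from $e_{q*g}$. Transitivity on an orthonormal basis does not by itself yield irreducibility (the regular representation of $\mathbb{Z}$ is the standard counterexample), so some further input is needed here; the paper itself stops at the transitivity observation.

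For statement (3) you have the right shape --- an infinite orthonormal family on which a nonzero compact operator cannot decay --- but the mechanism you propose is off and the crucial computation is missing. First, $T_{g}$ is unitary on $l^{2}(S_{a})$, not on $l^{2}(S^{a})$; restricted to $l^{2}(S^{a})$ it is a partial isometry supported on the span of $\{e_{q}:q\in G_{a}\}$, and $T_{g}^{*n}AT_{g}^{n}=T_{g^{-n}}AT_{g^{n}}$ lies again in the algebra, so conjugating by it cannot by itself contradict compactness. Second, the orbit must be taken on the right: the paper uses $x_{n}=e_{p*g^{n}}$, which is defined for every $p\in S^{a}$ and stays in $S^{a}$, whereas $g^{n}*p$ requires $\partial_{0}p=a$ and never leaves $l^{2}(G_{a})$. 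The point your sketch never states is \emph{why} $\left\|Ax_{n}\right\|$ is constant in $n$: approximating $A$ by $\sum_{i}\alpha_{i}T_{q_{i}}$ and using $T_{q}e_{p*g^{n}}=e_{q*p*g^{n}}$, one gets $Ae_{p*g^{n}}=\sum_{i}\alpha_{i}e_{p_{i}*g^{n}}$ with the \emph{same} coefficients as $Ae_{p}=\sum_{i}\alpha_{i}e_{p_{i}}$ --- equivalently, right translation by $g$ is a unitary commuting with the whole algebra --- whence $\left\|Ax_{n}\right\|=\left\|Ae_{p}\right\|>0$ for a suitable $p$ while $x_{n}\to 0$ weakly. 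Without identifying this commuting right action, ``acts nontrivially and uniformly'' is an assertion rather than an argument. (Both you and the paper also tacitly use that $g$ generates an infinite cyclic subgroup so that the $x_{n}$ are pairwise distinct.)
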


\begin{proof} 1) The set $\{e_p, \partial_1p=a\}_{p\in S}$ is a basis
of
$l^{2} (S^{a} )$. For all  $p_{1} ,p_{2} \in S^{a} $ and $p=p_{2}
*p_{1}^{-1} $ we have $T_{p} e_{p_{1} } =e_{p_{2} } $.  It means
that the algebra $C_{red}^{*} (S)$ is irreducible on $l^{2} (S^{a}
)$.

2) This statement follows from the fact that for every $p\in S$ operator  $T_{p}
$ maps the space
 $l^{2} (S^{a} )$ onto itself for every $a\in K$.

3) Let $p\in S^{a} $, $g\in G_{a} $ and $g\ne i_{a} $. Consider the
sequence $x_{n} =e_{p*g^n} $ where $g^n=\underbrace{g\ast
g\ast\ldots\ast g}_n.$ Since $g*g\ne g$ elements of the sequence $\{
x_{n} \} $ are pairwise orthogonal. If $A\in C_{red}^{*} (S)|_{l^{2}
(S^{a} )}$ is a compact operator then $\left\| Ax_{n} \right\| \to
0$ as $n\to \infty $. On the other hand $Ae_{p}
=\sum\limits_{i}\alpha _{i} e_{p_{i} }  $ where $p_{i} \in S^{a}$
and $\alpha_i$ are complex coefficients. Referral to the fact that
$A$ is approximated by finite linear combinations of operators
 $T_{q} $,
$q\in S$, and to the equality $T_{q} e_{p*g} =e_{q*p*g} $ we
obtain $Ae_{p*g} =\sum\limits_{i}\alpha _{i} e_{p_{i} *g}  $.
Similarly $Ae_{p\ast g^n} =\sum\limits_{i}\alpha _{i} e_{p_{i}
\ast g^n} $ for all $n$. Therefore, for every
 $n$ we have $\left\| Ax_{n} \right\| =\left(\sum\limits
_{i}\left|\alpha _{i} \right|^{2}  \right)^{1/2} >0$. Hence, $A$ is
not a compact operator.
\end{proof}

\begin{theorem} Let $K$ be an upward directed set. Then the following statements hold:

{\rm 1)} for every $p\in S$, such that $\partial _{0} p=a,\;
\partial _{1} p=b$, we have  $T_{p} =T_{[a,b]}$;

{\rm 2)} for every $a\in K$ the algebra $C_{red}^{*} (S)|_{l^{2}
(S^{a} )} $ coincides with the algebra of all compact operators
 on $B(l^{2} (S^{a} ))$;

{\rm 3)} the algebra $C_{red}^{*} (S)$ is non-unital.
\end{theorem}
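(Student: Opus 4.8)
The plan is to establish the three items in order, each resting on the previous one. For item 1) I would simply invoke Lemma~\ref{UpDirecSet} (item 3): when $K$ is upward directed, any path $p$ with $\partial_0 p=a$ and $\partial_1 p=b$ already coincides with the $1$-simplex $[a,b]$ as an element of the semigroup $S$. Since $\pi\colon p\mapsto T_p$ is a well-defined function on $S$, the semigroup equality $p=[a,b]$ forces $T_p=T_{[a,b]}$ at once.

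The substance lies in item 2). First I would identify the basis of $l^2(S^a)$. By Lemma~\ref{UpDirecSet} (item 3) every $p\in S^a$, that is, every path with $\partial_1 p=a$, has the form $[c,a]$ with $c=\partial_0 p$, and conversely each $c\in K$ yields exactly one such path; hence $\{e_{[c,a]}\}_{c\in K}$ is an orthonormal basis of $l^2(S^a)$ indexed by $K$. Next I would compute the restriction of a generator to this subspace. Using the rule defining $\pi$ together with the composition law $[d,c]\ast[c,a]=[d,a]$ from Lemma~\ref{UpDirecSet} (item 2), one obtains $T_{[d,c]}e_{[c,a]}=e_{[d,a]}$ and $T_{[d,c]}e_{[c',a]}=0$ whenever $c'\neq c$ (the multiplication producing the empty path). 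Thus the restriction of $T_{[d,c]}$ to $l^2(S^a)$ is exactly the matrix unit carrying $e_{[c,a]}$ to $e_{[d,a]}$ and annihilating the other basis vectors.

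With this computation both inclusions follow from elementary operator theory. For one inclusion, by item 1) every generator $T_p$ restricts on $l^2(S^a)$ to such a matrix unit, hence to a rank-one, and so compact, operator; since the compact operators form a closed two-sided ideal of $B(l^2(S^a))$, the $C^*$-algebra generated by these restrictions, namely $C_{red}^{*}(S)|_{l^2(S^a)}$, is contained in the compacts. For the reverse inclusion, as $c,d$ range over $K$ the operators $T_{[d,c]}|_{l^2(S^a)}$ furnish the complete system of matrix units for the basis $\{e_{[c,a]}\}$, whose closed linear span is precisely the algebra of compact operators on $l^2(S^a)$; this algebra is therefore contained in $C_{red}^{*}(S)|_{l^2(S^a)}$. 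Equality results.

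Finally, for item 3) I would argue by contradiction, combining item 2) with the decomposition $C_{red}^{*}(S)=\bigoplus_{a\in K}C_{red}^{*}(S)|_{l^2(S^a)}$ of Theorem~\ref{PrSum} (item 2). Since $|S^a|=|K|$ and $K$ is infinite, each $l^2(S^a)$ is infinite-dimensional, so $C_{red}^{*}(S)|_{l^2(S^a)}$ is the algebra of compact operators on an infinite-dimensional Hilbert space and hence has no unit. Were $C_{red}^{*}(S)$ unital with unit $E$, the restriction $E|_{l^2(S^a)}$ would be a unit for $C_{red}^{*}(S)|_{l^2(S^a)}$, a contradiction; therefore $C_{red}^{*}(S)$ is non-unital. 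The only delicate point in the whole argument is the matrix-unit identity of item 2), where the upward-directedness does the essential work through the single-valued composition $[d,c]\ast[c,a]=[d,a]$ and the triviality of the loop groups $G_a$ (Lemma~\ref{UpDirecSet}, item 4); once that identity is in place, items 2) and 3) reduce to standard facts about compact operators.
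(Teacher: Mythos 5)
Your proposal is correct and follows essentially the same route as the paper: item 1) by Lemma \ref{UpDirecSet}, item 2) by computing that each generator restricts to a rank-one matrix unit on the basis $\{e_{[c,a]}\}_{c\in K}$ of $l^{2}(S^{a})$ and hence generates exactly the compacts, and item 3) by restricting a hypothetical unit via the decomposition of Theorem \ref{PrSum}. You merely spell out the two inclusions in item 2) more explicitly than the paper does.
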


\begin{proof} 1) This statement follows from the Lemma \ref{UpDirecSet}.

2) The set $\{e_{[c,a]}\}_{c\in K}$ is a basis of $l^{2} (S^{a} )$.
For every operator $T_{p} $ we have $T_{p} e_{[c,a]} \ne 0$ if and
only if $\partial _{1} p=c$. Hence, $T_p = T_{[b,c]}$ for some $b$
and $T_{[b,c]}e_{[c,a]} = e_{[b,a]}$. Therefore, $T_{p} |_{l^{2}
(S^{a} )} $ is a one dimensional operator. So $C^*$-algebra
$C^*_{red}(S)|_{l^2(S^a)}$ coincides with the algebra of all compact
operators on $B(l^{2} (S^{a} ))$.

3) By the Theorem \ref{PrSum} for every element $A\in C_{red}^{*}
(S)$ we have $A=\mathop{\oplus }\limits_{a\in K} A_{a} $ where
$A_{a} \in C_{red}^{*} (S)|_{l^{2} (S^{a} )} $. If the algebra
$C_{red}^{*} (S)$ has the unit $I$ then $I_{a} =I|_{l^{2} (S^{a} )}
$ is a compact operator in the infinite dimensional Hilbert space.
This is a contradiction.
\end{proof}

Given $a\in K$ we denote $\mathcal{A}_a=C_{red}^{*} (S)|_{l^{2} (S^{a} )}$.

\begin{theorem} There exists the
set of *-isomorphisms
 $\{\gamma_{ba}, a\leq b\}_{a,b\in K}$:
\[\gamma_{ba}:\mathcal{A}_a \rightarrow \mathcal{A}_b,
\]
such that $\gamma_{cb}\circ\gamma_{ba}=\gamma_{ca}$ for all $a,b,c\in
K$ and $a\leq b\leq c$. And we obtain a net of isomorphic
$C^*$-algebras $\left\{\mathcal{A}_a,\gamma_{ba},a\leq b
\right\}_{a,b\in K}$ over the poset $K$.
\end{theorem}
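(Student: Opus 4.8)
The plan is to realize every $\gamma_{ba}$ as an inner $*$-isomorphism implemented by a canonical unitary between the starting-point subspaces $l^{2}(S^{a})$ and $l^{2}(S^{b})$. Fix $a\le b$. Since $a\le b$ the elementary path $(a,b)$ is defined, with $\partial_{0}(a,b)=a$, $\partial_{1}(a,b)=b$, and its inverse is $(a,b)^{-1}=\overline{(b,a)}$. For $q\in S^{a}$ the product $q\ast(a,b)$ is defined because $\partial_{1}q=a=\partial_{0}(a,b)$, and it lies in $S^{b}$ since $\partial_{1}\bigl(q\ast(a,b)\bigr)=\partial_{1}(a,b)=b$. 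First I would define $U_{ba}\colon l^{2}(S^{a})\to l^{2}(S^{b})$ on the orthonormal basis by $U_{ba}e_{q}=e_{q\ast(a,b)}$. Using property 2) together with axiom 3 (namely $(a,b)\ast\overline{(b,a)}=i_{a}$ and $\overline{(b,a)}\ast(a,b)=i_{b}$) and the reduction property 4) of $S$, the assignment $q\mapsto q\ast(a,b)$ is a bijection of $S^{a}$ onto $S^{b}$ with inverse $r\mapsto r\ast\overline{(b,a)}$; hence $U_{ba}$ extends to a unitary.

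Next I would set $\gamma_{ba}=\operatorname{Ad}U_{ba}$, i.e.\ $\gamma_{ba}(A)=U_{ba}AU_{ba}^{*}$, which is automatically an isometric $*$-isomorphism of $B(l^{2}(S^{a}))$ onto $B(l^{2}(S^{b}))$. The essential point is that it carries $\mathcal{A}_{a}$ onto $\mathcal{A}_{b}$, and the key computation is on the generators. For $p\in S$ and $r\in S^{b}$ one has $U_{ba}^{*}e_{r}=e_{r\ast\overline{(b,a)}}$, and then, whenever $\partial_{1}p=\partial_{0}r$ (and giving $0$ otherwise),
\[U_{ba}\,T_{p}\,U_{ba}^{*}\,e_{r}=e_{p\ast r\ast\overline{(b,a)}\ast(a,b)}=e_{p\ast r},\]
where the last equality uses $\overline{(b,a)}\ast(a,b)=i_{b}$ and property 2). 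Thus $\gamma_{ba}\bigl(T_{p}|_{l^{2}(S^{a})}\bigr)=T_{p}|_{l^{2}(S^{b})}$ for every $p\in S$. Since by Theorem \ref{PrSum} the algebra $\mathcal{A}_{a}$ is the uniformly closed algebra generated by the operators $T_{p}|_{l^{2}(S^{a})}$ and $\mathcal{A}_{b}$ by the operators $T_{p}|_{l^{2}(S^{b})}$, and $\operatorname{Ad}U_{ba}$ is an isometric $*$-isomorphism taking one generating family bijectively onto the other, it restricts to a $*$-isomorphism of $\mathcal{A}_{a}$ onto $\mathcal{A}_{b}$.

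Finally I would verify the net relation $\gamma_{cb}\circ\gamma_{ba}=\gamma_{ca}$ for $a\le b\le c$. Because the $\gamma$'s are implemented by the $U$'s, it suffices to check $U_{cb}U_{ba}=U_{ca}$ on basis vectors. For $q\in S^{a}$,
\[U_{cb}U_{ba}e_{q}=e_{q\ast(a,b)\ast(b,c)}=e_{q\ast(a,c)}=U_{ca}e_{q},\]
where the middle equality is precisely axiom 1, $(a,b)\ast(b,c)=(a,c)$. Taking $a=b=c$ gives $U_{aa}=I$ (since $q\ast i_{a}=q$), hence $\gamma_{aa}=\operatorname{id}$, so $\{\mathcal{A}_{a},\gamma_{ba},a\le b\}_{a,b\in K}$ is indeed a net of isomorphic $C^{*}$-algebras over $K$.

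The main obstacle, and really the only subtle point, is the identification $\gamma_{ba}(T_{p}|_{l^{2}(S^{a})})=T_{p}|_{l^{2}(S^{b})}$: one must work with the starting-point decomposition $l^{2}(S)=\bigoplus_{a}l^{2}(S^{a})$ (not the ending-point decomposition $\bigoplus_{a}l^{2}(S_{a})$ that appears in Lemma \ref{Tp}), multiply the path $q$ on the \emph{right} by $(a,b)$, and invoke the reduction property 4) so that the cancellation $q\ast(a,b)\ast\overline{(b,a)}=q$ is unambiguous. Once this intertwining is established, surjectivity onto $\mathcal{A}_{b}$ and the cocycle identity are formal consequences of the semigroup axioms.
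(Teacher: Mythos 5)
Your proposal is correct and follows essentially the same route as the paper: the paper also defines the unitary $e_{q}\mapsto e_{q\ast(a,b)}$ from $l^{2}(S^{a})$ to $l^{2}(S^{b})$, sets $\gamma_{ba}=\mathrm{Ad}\,U$, and verifies the cocycle identity via $(a,b)\ast(b,c)=(a,c)$ exactly as you do. The only difference is that you explicitly carry out the intertwining computation $U T_{p}U^{*}=T_{p}$ showing $\gamma_{ba}(\mathcal{A}_{a})=\mathcal{A}_{b}$, which the paper dismisses with ``one can check''.
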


\begin{proof} Define a unitary operator $U_{ab} :l^{2} (S^{a} )\to l^{2}
(S^{b} )$ for all $a,b\in K$, $a\leq b$, by
\[U_{ab} e_{q} =e_{q*(a,b)} \] for every
$q\in S^{a}$. Then  $U_{ab}^{*} =U_{\overline{ba}} :l^{2} (S^{b}
)\to l^{2} (S^{a} )$ is the adjoint operator. Obviously, $U_{ab}^{*}
U_{ab} =id|_{l^{2} (S^{a} )}$ and $U_{ab} U_{ab}^{*} =id|_{l^{2}
(S^{b} )}$. Let us define a mapping $\gamma _{ba} :\mathcal{A}_a
\rightarrow \mathcal{A}_b $ by
\[\gamma _{ba} (A)=U_{ab} AU_{ab}^{*}\]
for every $A\in\mathcal{A}_a$. One can check that $\gamma _{ba} $ is
the *-isomorphism. It remains to check the equality
$\gamma_{cb}\circ\gamma_{ba}=\gamma_{ca}$ for $a\leq b\leq c.$ We
observe that
$$(\gamma_{cb}\circ\gamma_{ba})(A)=\gamma_{cb}(\gamma_{ba}(A))=U_{bc}U_{ab}AU^*_{ab}U^*_{bc}$$
for every $A\in \mathcal{A}_a.$ Otherwise
$$U_{bc}U_{ab}e_q=U_{bc}e_{q\ast(a,b)}=e_{q\ast(a,b)\ast(b,c)}=e_{q\ast(a,c)}=U_{ac}
e_q$$ for every $q\in S^a$ and similarly
$U^*_{ab}U^*_{bc}e_p=U^*_{ac}e_p$ for every $p\in S^c.$ So
$(\gamma_{cb}\circ\gamma_{ba})(A)=\gamma_{ca}(A)$ for every $A\in
\mathcal{A}_a.$
\end{proof}

\begin{remark} The set of isomorphisms $\{\gamma_{ba}, a\leq b\}_{a,b\in K}$
can be extended from elementary paths to 1-simplices
$\{\gamma_{[b^xa]}, a,b\leq x\}_{a,b,x\in K}$ by
$\gamma_{[b^xa]}=\gamma^{-1}_{xb}\circ\gamma_{xa}$,  so that they
satisfy 1-cocycle identity \cite{RuzziVasselli}:
$\gamma_{[c^yb]}\circ\gamma_{[b^xa]}=\gamma_{[c^za]}$ for
$[c^yb]\ast[b^xa]=[c^za]$. Extending the set $\{\gamma_{[b^xa]},
a,b\leq x\}_{a,b,x\in K}$ to paths we get the set of isomorphisms
$\{\gamma_p\}_{p\in S}$ satisfying the equality
$\gamma_{p_2}\circ\gamma_{p_1}=\gamma_{p_2\ast p_1}$ for all
$p_1,p_2\in S$ and $\partial_0p_1=\partial_1p_2$.
\end{remark}

\section{Extensions of the $C^*$-algebra $C_{red}^{*} (S)$}

In this section we consider the extensions of the algebra
$C^*_{red}(S)$, such that this algebra is an ideal in that
extensions and quotient algebras are isomorphic to the Cuntz
algebra.

Let $K$ be an upward directed countable set. By the lemma
\ref{UpDirecSet} for every path  $p\in S$, such that $\partial _{0}
p=a,\;\partial _{1} p=b$, we have $p=[a,b]$. Let us represent the
set $K$ as a finite union of countable disjoint sets
$$K=\bigcup _{i=1}^{n}E_{i},  $$   where $E_{i} \bigcap
E_{j} =\emptyset$ for $i\ne j$.

We define one-to-one mappings $\phi _{i}:E_{i} \to K,$
$i=1,\ldots,n$, and operators $T_{\phi _{i}}: l^2(S)\to l^2(S)$ in
the following way:
\[T_{\phi _{i} } =\bigoplus\limits_{x\in E_{i} } T_{[x,\phi _{i} (x)]} , \  i=1,\ldots,n.\]
An adjoint operator of the operator $T_{\phi _{i} } $ is $$T_{\phi
_{i} }^{*} =\bigoplus\limits_{x\in E_{i} } T_{[x,\phi _{i} (x)]}^{*}
=\bigoplus\limits_{x\in E_{i} } T_{[\phi _{i}(x),
x]}=\bigoplus\limits_{x\in K} T_{[x,\phi _{i}^{-1} (x)]} $$.

The following equalities hold:
$$T_{\phi _{i} }^{*} T_{\phi _{i} } =id; \ \
T_{\phi _{i} }^{*} T_{\phi _{j} } =0, \ i\ne j; \ \ \sum
_{i=1}^{n}T_{\phi _{i} } T_{\phi _{i} }^{*}  =id.$$

Indeed every basis element has a form $e_{[a,b]} $. Therefore,
$$
{T_{\phi _{i} }^{*} T_{\phi _{i} } e_{[a,b]} = T_{\phi _{i} }^{*}
T_{[\phi _{i}^{-1} (a),a]} e_{[a,b]} = T_{\phi _{i} }^{*} e_{[\phi
_{i}^{-1} (a),b]} =}$$
$${=T_{[a,\phi _{i}^{-1} (a)]} e_{[\phi
_{i}^{-1} (a),b]} =e_{[a,b]} .} $$ Analogously, since $E_{i} \bigcap
E_{j} =\emptyset$ we have $T_{\phi _{i} }^{*} T_{\phi _{j} }
e_{[a,b]} =0.$ Finally if $a\in E_{k} $ then
$${\left(\sum _{i=1}^{n}T_{\phi _{i} }
T_{\phi _{i} }^{*} \right)e_{[a,b]} =T_{\phi _{k} } T_{[\phi _{k}
(a),a]} e_{[a,b]} =T_{\phi _{k} } e_{[\phi _{k} (a),b]} =}$$
$${=T_{[a,\phi _{k} (a)]} e_{[\phi _{k} (a),b]} =e_{[a,b]} .}
$$

Let us consider a uniformly closed subalgebra of $B(l^{2} (S))$
 generated by operators $T_{p} $, $p\in S$, and $T_{\phi _{i} } $, $i=1,\ldots,n$. Denote it by $C_{red,n}^{*} (S)$.
 The algebra $C_{red,n}^{*} (S)$ is unital. Hence, it doesn't coincide with $C_{red}^{*} (S)$.
 It is an extension of algebra $C_{red}^{*} (S)$. Moreover the following lemma holds.

\begin{lemma}\label{ideal} The algebra $C_{red}^{*} (S)$ is an ideal in $C_{red,n}^{*}
(S)$.\end{lemma}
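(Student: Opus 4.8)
The plan is to verify the ideal property on generators and then pass to norm limits. Both algebras are $*$-closed and generated by operators of the form $T_p$, with $C_{red,n}^{*}(S)$ generated in addition by $T_{\phi_i}$ and $T_{\phi_i}^{*}$. Let $\mathcal{D}$ denote the linear span of $\{T_p:p\in S\}$, which is dense in $C_{red}^{*}(S)$, and let $\mathcal{B}$ denote the linear span of all words in $\{T_p,T_{\phi_i},T_{\phi_i}^{*}\}$, which is a dense $*$-subalgebra of $C_{red,n}^{*}(S)$. Since $C_{red}^{*}(S)$ is a closed $*$-subalgebra, it suffices to prove that it is a left ideal: if $\mathcal{B}\mathcal{D}\subseteq\mathcal{D}$, then for $A\in C_{red}^{*}(S)$ and $B\in C_{red,n}^{*}(S)$ one gets $AB=(B^{*}A^{*})^{*}\in C_{red}^{*}(S)$ because $A^{*}\in C_{red}^{*}(S)$, $B^{*}\in C_{red,n}^{*}(S)$ and the left-ideal property applies to $B^{*}A^{*}$. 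Passing to closures (joint continuity of multiplication on bounded sets) then yields that $C_{red}^{*}(S)$ is a closed two-sided ideal.

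The key step is to compute the product of each generator of $C_{red,n}^{*}(S)$ with a single $T_p$. By Lemma \ref{UpDirecSet} every path is a $1$-simplex $[a,b]$, so I only need the following three formulas: (i) $T_q T_{[a,b]}=T_{q*[a,b]}$, which is again some $T_{p'}$ or $0$; (ii) $T_{\phi_i}T_{[a,b]}=T_{[\phi_i^{-1}(a),b]}$; and (iii) $T_{\phi_i}^{*}T_{[a,b]}=T_{[\phi_i(a),b]}$ if $a\in E_i$ and $0$ otherwise. I would establish (ii) and (iii) by evaluating on the basis $\{e_{[c,d]}\}$: one has $T_{[a,b]}e_{[c,d]}=e_{[a,d]}$ when $c=b$ and $0$ otherwise, and then $T_{\phi_i}e_{[a,d]}=e_{[\phi_i^{-1}(a),d]}$ selects the unique summand $T_{[\phi_i^{-1}(a),a]}$ of $T_{\phi_i}=\bigoplus_{x\in E_i}T_{[x,\phi_i(x)]}$ for which $\phi_i(x)=a$; the adjoint case is analogous. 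In every case the outcome lies in $\mathcal{D}$.

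To finish, I would argue by induction on word length. For a word $w=g_1\cdots g_k$ in the generators, $wT_p=g_1\cdots g_{k-1}(g_kT_p)$, and by (i)--(iii) the factor $g_kT_p$ is a finite linear combination of operators $T_{p'}$, hence lies in $\mathcal{D}$; applying the inductive hypothesis to each such $T_{p'}$ gives $wT_p\in\mathcal{D}$. By linearity $\mathcal{B}\mathcal{D}\subseteq\mathcal{D}$, and taking closures completes the proof.

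The main obstacle is conceptual rather than computational. The operator $T_{\phi_i}$ is an infinite orthogonal sum and does \emph{not} itself belong to $C_{red}^{*}(S)$ (indeed $C_{red,n}^{*}(S)$ is unital whereas $C_{red}^{*}(S)$ is not). The crux is therefore to show that such an infinite-sum operator still maps $C_{red}^{*}(S)$ into itself. This works precisely because each $T_{[a,b]}$ acts as a single matrix unit on every block $l^{2}(S^{d})$ and so truncates the infinite sum defining $T_{\phi_i}$ to one surviving term; verifying this selection, and that the surviving term is again a genuine $T_{p'}$ using the bijectivity of $\phi_i$, is the step that requires care.
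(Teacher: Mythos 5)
Your proof is correct and follows essentially the same route as the paper: compute the products of the generators of $C_{red,n}^{*}(S)$ with the operators $T_{[a,b]}$, observe that the result is again an operator of the form $T_{[x,y]}$ (or $0$), and pass to norm limits by density. The paper's version is terser---it records only $T_{\phi_i}T_{[a,b]}=T_{[x,b]}$ and $T_{[a,b]}T_{\phi_i}=T_{[a,y]}$ and then appeals to approximation---whereas you additionally spell out the treatment of $T_{\phi_i}^{*}$, the induction on word length, and the adjoint reduction to the left-ideal case, all of which the paper leaves implicit.
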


\begin{proof} We have $T_{\phi _{i} } T_{[a,b]}
=T_{[x,b]} $ for some $x\in K$
 and $T_{[a,b]} T_{\phi _{i} } =T_{[a,y]} $ for some $y\in K$. Since every  element $A\in C_{red}^{*} (S)$
 can be approximated by finite linear combinations of operators $T_{[a,b]} $ then $T_{\phi _{i} } A$ and
 $AT_{\phi _{i} } \in C_{red}^{*} (S)$.  \end{proof}

Let us recall the definition of the Cuntz algebra. The
\textit{finite Cuntz algebra} $O_{n}$ is a $C^*$-algebra generated
by isometries $s_{1} ,\ldots,s_{n} $
 satisfying to the following conditions:
\[s_{i}^{*} s_{j} =\delta _{ij} id,\quad \sum _{i=1}^{n}s_{i} s_{i}^{*}  =id.\]

The \textit{infinite Cuntz algebra} $O_{\infty } $ is a
$C^*$-algebra generated by $s_{1} ,s_{2} ,\ldots$ and relations
$$s_{i}^{*} s_{j} =\delta _{ij} id,\quad \sum _{i=1}^{n}s_{i}
s_{i}^{*}  \le id$$ for every $n\in \mathbb{N}$.

\begin{theorem} There
exist an isomorphism $C_{red,n}^{*} (S)/C_{red}^{*} (S)\cong O_{n} $
and  a short exact sequence
\[0\to C_{red}^{*} (S)\mathop{\to }\limits^{id} C_{red,n}^{*} (S)\mathop{\to }\limits^{\pi } O_{n} \to 0,\]
where $id$ is an embedding map and $\pi $ is a quotient map.
\end{theorem}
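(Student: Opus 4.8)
The plan is to exploit the fact that the operators $T_{\phi_1},\ldots,T_{\phi_n}$ already satisfy, exactly and inside $C_{red,n}^{*}(S)$, the defining relations of the Cuntz algebra, namely $T_{\phi_i}^{*}T_{\phi_j}=\delta_{ij}\,id$ and $\sum_{i=1}^{n}T_{\phi_i}T_{\phi_i}^{*}=id$, which were verified before the statement. By Lemma \ref{ideal} the $C^{*}$-subalgebra $C_{red}^{*}(S)$ is a closed two-sided ideal of $C_{red,n}^{*}(S)$, so the quotient $Q=C_{red,n}^{*}(S)/C_{red}^{*}(S)$ is a $C^{*}$-algebra and the canonical projection $\pi\colon C_{red,n}^{*}(S)\to Q$ is a surjective $*$-homomorphism with kernel $C_{red}^{*}(S)$. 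This already supplies the short exact sequence once $Q$ is identified with $O_n$: the map $id$ is the inclusion of the ideal and $\pi$ is the quotient map.

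Next I would set $s_i=\pi(T_{\phi_i})$ and observe that, since $\pi$ is a unital $*$-homomorphism, these satisfy $s_i^{*}s_j=\delta_{ij}\,\pi(id)$ and $\sum_{i=1}^{n}s_is_i^{*}=\pi(id)$, i.e. exactly the Cuntz relations in $Q$ with unit $\pi(id)$. By the universal property of $O_n$ there is then a $*$-homomorphism $\rho\colon O_n\to Q$ carrying the canonical isometries to the $s_i$.

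To finish I would prove that $\rho$ is bijective. For surjectivity: $Q$ is generated as a $C^{*}$-algebra by the images under $\pi$ of the generators $T_p$ and $T_{\phi_i}$ of $C_{red,n}^{*}(S)$; but every $T_p$ lies in $\ker\pi=C_{red}^{*}(S)$, so $\pi(T_p)=0$ and $Q$ is generated by $s_1,\ldots,s_n$ alone, whence $\rho$ is onto. For injectivity I would invoke the simplicity of $O_n$ for $2\le n<\infty$: a nonzero $*$-homomorphism out of a simple $C^{*}$-algebra is automatically faithful. That $\rho$ is nonzero follows because $Q\neq 0$, which in turn holds since $C_{red,n}^{*}(S)$ is unital while $C_{red}^{*}(S)$ is not (shown earlier), so $\pi(id)\neq 0$ and $\rho(1_{O_n})=\pi(id)\neq 0$. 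Hence $\rho$ is a $*$-isomorphism $O_n\cong Q$, and combined with the first paragraph this yields the claimed isomorphism and exact sequence.

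I expect the only genuine obstacle to be the injectivity of $\rho$: the Cuntz relations constrain the $s_i$ from one side, but one must still rule out any further collapsing in the quotient, and the clean way to do this is the simplicity theorem for $O_n$ rather than a direct computation. For $n=1$ the algebra $O_1$ is not simple and this step breaks down, but that degenerate case is not of interest here.
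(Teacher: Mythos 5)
Your proposal is correct and follows essentially the same route as the paper: form the quotient $Q=C_{red,n}^{*}(S)/C_{red}^{*}(S)$ using Lemma \ref{ideal}, observe that the classes $[T_{\phi_i}]$ are isometries satisfying the Cuntz relations and generate $Q$ (since every $T_p$ lies in the ideal and dies in the quotient), and identify $Q$ with $O_n$ via the universal property. Your write-up is in fact more complete than the paper's one-line appeal to ``universality,'' which by itself only produces a surjection $O_n\to Q$; the injectivity you supply via simplicity of $O_n$ for $n\ge 2$ (together with $Q\neq 0$, which follows from the non-unitality of $C_{red}^{*}(S)$) is precisely the step the paper leaves implicit.
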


\begin{proof} Equivalence classes $[T_{\phi _{i} } ]=
 T_{\phi _{i} } +C_{red}^{*} (S)$, $i=1,\ldots,n$,
 are generators of the quotient algebra $C_{red,n}^{*} (S)/C_{red}^{*} (S)$.
 These classes are isometric operators satisfying the following identity:
\[\sum _{i=1}^{n}[T_{\phi _{i} } ][T_{\phi _{i} }^{*} ] =id.\]
Due to the universality of the Cuntz algebra we observe that
$$C_{red,n}^{*} (S)/C_{red}^{*} (S)\cong O_{n}. $$
\end{proof}

Now let us represent the set $K$ as a countable union of disjoint
countable sets:
\[K=\bigcup _{i=1}^{\infty }E_{i} \]
and define operators $T_{\phi _{i}}: l^2(S)\to l^2(S)$ in the
following way:
\[T_{\phi _{i} } =\bigoplus\limits_{x\in E_{i} } T_{[x,\phi _{i} (x)]} , \  i=1,2,\ldots.\]
By applying the reasoning used above one can prove the following
equalities:
$$T_{\phi _{i} }^{*} T_{\phi _{i} } =id; \ \
T_{\phi _{i} }^{*} T_{\phi _{j} } =0, \ i\ne j; \ \ \sum
_{i=1}^{n}T_{\phi _{i} } T_{\phi _{i} }^{*}  \leq id $$ for every
$n\in \mathbb{N}$.

Let us denote by $C_{red,\infty }^{*} (S)$ the uniformly closed
subalgebra of $B(l^{2} (S))$ generated by operators $T_{p} $, $p\in
S$, and $T_{\phi _{i} } $, $i=1,2,\ldots$.

Similarly to the Lemma \ref{ideal} the algebra $C_{red}^{*} (S)$ is
an ideal in $C_{red,\infty}^{*} (S)$ and for the infinite Cuntz
algebra the following theorem  holds.

\begin{theorem}
There exist an isomorphism $C_{red,\infty}^{*} (S)/C_{red}^{*}
(S)\cong O_{\infty } $ and  a short exact sequence
\[0\to C_{red}^{*} (S)\mathop{\to }\limits^{id} C_{red,\infty}^{*} (S)\mathop{\to }\limits^{\pi } O_{\infty} \to 0,\]
where $id$ is an embedding map and $\pi $ is a quotient map.
\end{theorem}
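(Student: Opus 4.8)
The plan is to follow the argument of the finite case, supplying the one extra ingredient that the infinite situation requires: the simplicity of the Cuntz algebra, which upgrades the universal surjection to an isomorphism. Since $C_{red,\infty}^{*}(S)$ is unital while $C_{red}^{*}(S)$ is non-unital, the ideal $C_{red}^{*}(S)$ is proper and the quotient $Q:=C_{red,\infty}^{*}(S)/C_{red}^{*}(S)$ is a nonzero unital $C^*$-algebra. First I would note that the ideal $C_{red}^{*}(S)$ is generated by the operators $T_p$, $p\in S$, so these operators have zero image under the quotient map; consequently $Q$ is generated by the classes $[T_{\phi_i}]=T_{\phi_i}+C_{red}^{*}(S)$, $i=1,2,\ldots$.

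Next I would push the established operator identities through the (unital, positive, $*$-preserving) quotient homomorphism. From
$$T_{\phi_i}^{*}T_{\phi_j}=\delta_{ij}\,id,\qquad \sum_{i=1}^{n}T_{\phi_i}T_{\phi_i}^{*}\le id\quad(n\in\mathbb{N}),$$
one obtains in $Q$ the relations
$$[T_{\phi_i}^{*}][T_{\phi_j}]=\delta_{ij}\,id,\qquad \sum_{i=1}^{n}[T_{\phi_i}][T_{\phi_i}^{*}]\le id\quad(n\in\mathbb{N}).$$
Thus the $[T_{\phi_i}]$ are isometries with mutually orthogonal ranges whose range projections, summed over any finite index set, are dominated by the unit. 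These are exactly the defining relations of $O_{\infty}$.

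By the universal property of $O_{\infty}$ there is a $*$-homomorphism $\rho:O_{\infty}\to Q$ sending the canonical generators $s_i$ to $[T_{\phi_i}]$, and $\rho$ is surjective because the $[T_{\phi_i}]$ generate $Q$. Since each $T_{\phi_i}$ is an isometry and $Q\neq 0$, each $[T_{\phi_i}]$ is a nonzero isometry, so $\rho\neq 0$. Here is the step that distinguishes the infinite case from a mere appeal to universality: $O_{\infty}$ is simple, and a nonzero $*$-homomorphism out of a simple $C^*$-algebra is injective. Hence $\rho$ is an isomorphism $O_{\infty}\cong Q$.

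Finally, the short exact sequence is immediate. The inclusion realizes $C_{red}^{*}(S)$ as an ideal in $C_{red,\infty}^{*}(S)$ (established just above, in analogy with Lemma \ref{ideal}), and composing the quotient map onto $Q$ with $\rho^{-1}$ yields the surjection $\pi:C_{red,\infty}^{*}(S)\to O_{\infty}$ whose kernel is precisely $C_{red}^{*}(S)$. The main obstacle I anticipate is twofold: verifying that the inequality $\sum_{i=1}^{n}[T_{\phi_i}][T_{\phi_i}^{*}]\le id$ survives in the quotient, and invoking the simplicity of $O_{\infty}$ to secure injectivity; once both are in place the result follows exactly as in the finite Cuntz case.
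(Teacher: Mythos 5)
Your proposal is correct and follows the same route the paper intends: pass the relations $T_{\phi_i}^{*}T_{\phi_j}=\delta_{ij}\,id$ and $\sum_{i=1}^{n}T_{\phi_i}T_{\phi_i}^{*}\le id$ through the quotient map, observe that the classes $[T_{\phi_i}]$ generate the quotient because every $T_p$ dies, and identify the quotient with $O_{\infty}$. In fact the paper gives no written proof for the infinite case at all (it only says the finite argument carries over), and even in the finite case it justifies the isomorphism by appealing solely to ``the universality of the Cuntz algebra,'' which by itself only produces a surjection $O_{\infty}\to Q$. Your explicit invocation of the simplicity of $O_{\infty}$ to upgrade that surjection to an isomorphism is precisely the ingredient the paper leaves implicit, and your check that $Q\neq 0$ (via the unit $T_{\phi_i}^{*}T_{\phi_i}=id$ lying in $C_{red,\infty}^{*}(S)$ while $C_{red}^{*}(S)$ is non-unital) is also needed and correctly supplied. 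So your write-up is not merely equivalent to the paper's argument but is a more complete version of it.
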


\textbf{Acknowledgements.} We thank E. Vasselli for helpful comments which have led to significant improvements.

\end{document}